\newtheorem{precor}{{\bf Corollary}}
\newenvironment{cor}{\begin{precor}{\hspace{-0.5
               em}{\bf.\ }}}{\end{precor}}
\newtheorem{precon}{{\bf Conjecture}}
\newenvironment{con}{\begin{precon}{\hspace{-0.5
               em}{\bf.\ }}}{\end{precon}}
\newtheorem{predefin}{{\bf Definition}}
\newtheorem{preexm}{{\bf Example}}
\newtheorem{preappl}{{\bf Application}}
\newtheorem{prelem}{{\bf Lemma}}
\newenvironment{lem}{\begin{prelem}{\hspace{-0.5
               em}{\bf.\ }}}{\end{prelem}}
\newtheorem{preproof}{{\bf Proof.\ }}
\newenvironment{proof}[1]{\begin{preproof}{\rm
               #1}\hfill{$\blacksquare$}}{\end{preproof}}
\newtheorem{presproof}{{\bf Sketch of Proof.\ }}
\newtheorem{prethm}{{\bf Theorem}}
\newenvironment{thm}{\begin{prethm}{\hspace{-0.5
               em}{\bf.\ }}}{\end{prethm}}
\newtheorem{prealphthm}{{\bf Theorem}}
\newtheorem{prealphlemma}{{\bf Lemma}}
\newtheorem{prepro}{{\bf Proposition}}
\newtheorem{preprb}{{\bf Problem}}
\newtheorem{prequ}{{\bf Question}}
\def\conct[#1,#2]{\mbox {${#1} \leftrightarrow {#2}$}}
\def\dconct[#1,#2]{\mbox {${#1} \rightarrow {#2}$}}
\def\deg[#1,#2]{\mbox {$d_{_{#1}}(#2)$}}
\def\mindeg[#1]{\mbox {$\delta_{_{#1}}$}}
\def\maxdeg[#1]{\mbox {$\Delta_{_{#1}}$}}
\def\outdeg[#1,#2]{\mbox {$d_{_{#1}}^{^+}(#2)$}}
\def\minoutdeg[#1]{\mbox {$\delta_{_{#1}}^{^+}$}}
\def\maxoutdeg[#1]{\mbox {$\Delta_{_{#1}}^{^+}$}}
\def\indeg[#1,#2]{\mbox {$d_{_{#1}}^{^-}(#2)$}}
\def\minindeg[#1]{\mbox {$\delta_{_{#1}}^{^-}$}}
\def\maxindeg[#1]{\mbox {$\Delta_{_{#1}}^{^-}$}}
\def\isdef{\mbox {$\ \stackrel{\rm def}{=} \ $}}
\def\dre[#1,#2,#3]{\mbox {${\cal E}_{_{#3}}(#1,#2)$}}
\def\pdre[#1,#2,#3]{\mbox {${\cal P}_{_{#3}}(#1,#2)$}}
\def\var[#1,#2]{\mbox {${\rm Var}_{_{#1}}(#2)$}}
\def\ls[#1]{\mbox {$\xi^{^{#1}}$}}
\def\hom[#1,#2]{\mbox {${\rm Hom}({#1},{#2})$}}
\def\onvhom[#1,#2]{\mbox {${\rm Hom^{v}}(#1,#2)$}}
\def\onehom[#1,#2]{\mbox {${\rm Hom^{e}}(#1,#2)$}}
\def\core[#1]{\mbox {$#1^{^{\bullet}}$}}
\def\cay[#1,#2]{\mbox {${\rm Cay}({#1},{#2})$}}
\def\cays[#1,#2]{\mbox {${\rm Cay_{s}}({#1},{#2})$}}
\def\dirc[#1]{\mbox {$\stackrel{\rightarrow}{C}_{_{#1}}$}}
\def\cycl[#1]{\mbox {${\bf Z}_{_{#1}}$}}
\def\sdg[#1]{\mbox {$\stackrel{\leftrightarrow}{#1}$}}
\begin{document}
\begin{center}
{\Large \bf Dynamic Chromatic Number of Regular Graphs}\\
\vspace*{0.5cm}
{\bf Meysam Alishahi}\\
{\it Department of Mathematics}\\
{\it Shahrood University of Technology, Shahrood, Iran}\\
{\tt meysam\_alishahi@shahroodut.ac.ir}\\
\end{center}
\begin{abstract}
\noindent A dynamic coloring of a graph $G$ is a proper coloring
 such that for every vertex $v\in V(G)$ of degree at least 2,
the neighbors of $v$ receive at least 2 colors.
It was conjectured [B.~Montgomery. {\em Dynamic coloring of graphs}.
PhD thesis, West Virginia University, 2001.] that if $G$ is a $k$-regular graph, then $\chi_2(G)-\chi(G)\leq 2$.
In this paper, we prove that if $G$ is a $k$-regular graph with $\chi(G)\geq 4$, then
$\chi_2(G)\leq \chi(G)+\alpha(G^2)$. It confirms the conjecture for all regular graph $G$ with diameter at most $2$ and $\chi(G)\geq 4$.
In fact, it shows that $\chi_2(G)-\chi(G)\leq 1$ provided that $G$ has diameter at most $2$ and $\chi(G)\geq 4$.
Moreover, we show that for any $k$-regular graph $G$, $\chi_2(G)-\chi(G)\leq 6\ln k+2$.
Also, we show that for any $n$ there exists a regular graph $G$ whose chromatic number is $n$ and $\chi_2(G)-\chi(G)\geq 1$.
This result gives a negative answer to a conjecture of [A.~Ahadi, S.~Akbari, A.~Dehghan, and M.~Ghanbari.
\newblock On the difference between chromatic number and dynamic chromatic
number of graphs. \newblock {\em Discrete Math.}, In press].
\begin{itemize}
\item[]{{\footnotesize {\bf Key words:}\  Dynamic chromatic number, $2$-colorability of hypergraphs.}}
\item[]{ {\footnotesize {\bf Subject classification: 05C} .}}
\end{itemize}
\end{abstract}
\section{Introduction}
Let $H$ be a hypergraph. The vertex set and the
hyperedge set of $H$  are mentioned as $V(H)$ and
$E(H)$, respectively. The maximum degree and the minimum
degree of $H$ are denoted by $\Delta(H)$ and
$\delta(H)$, respectively.
For an integer $l\geq 1$, denote by $[l]$, the set $\{1, 2,
\ldots, l\}$. A proper $l$-coloring of a hypergraph
$H$ is a function $c: V(H)\longrightarrow [l]$  in which there is no monochromatic
hyperedge in $H$. We say a hypergraph $H$ is
$t$-colorable if there is a proper $t$-coloring of it.  For a
hypergraph $H$, the smallest integer $l$ so that
$H$ is $l$-colorable is called the chromatic number of
 $H$ and denoted by $\chi(H)$. Note that a graph $G$ is a
hypergraph such that  the cardinality of each $e\in E(G)$ is 2.

A proper vertex $l$-coloring of a graph $G$ is called a dynamic
$l$-coloring \cite{montgomery} if for every vertex $u$ of degree
at least $2$, there are at least two different colors appearing in
the neighborhood of $v$. The smallest integer $l$ so that there is
a dynamic $l$-coloring of $G$ is called {\it the dynamic
chromatic number of $G$} and denoted by $\chi_2(G)$. Obviously,
$\chi(G)\leq\chi_2(G)$. Some properties of dynamic coloring were
studied in \cite{akbari2,MR2746973,MR2251583,MR1991048,montgomery,123}.
It was proved in \cite{MR1991048} that for a connected graph $G$ if  $\Delta\leq3$,
then $\chi_2(G)\leq4$ unless $G=C_5$, in which case
$\chi_2(C_5)=5$ and if $\Delta\geq4$, then $\chi_2(G)\leq\Delta+1$.
It was shown in \cite{montgomery} that the difference between chromatic number and
dynamic chromatic number can be arbitrarily large. However, it was
conjectured that for regular graphs the difference is at most 2.
\begin{con}{\rm\cite{montgomery}}\label{conj}
For any regular graph $G$, $\chi_2(G)-\chi(G)\leq 2$
\end{con}
Also, it was proved in \cite{montgomery} that if $G$ is a
bipartite $k$-regular graph, $k\geq 3$ and $n<2^k$, then
$\chi_2(G)\leq 4$. This result was extended to all regular bipartite graphs in \cite{MR2746973}.

In a graph $G$, a set $T\subseteq V(G)$ is called a {\it total
dominating set} in $G$ if for every vertex $v \in V(G)$, there is
at least one vertex $u \in T$ adjacent to $v$. The set $T\subsetneq V(G)$ is
called a {\it double total dominating set} if $T$ and its
complement $V(G)\setminus T$ are both total dominating \cite{MR2746973}.
Also, by ${\cal I}(G)$ and ${\cal IM}(G)$ we refer to the set of independent and maximal independent sets in $G$, respectively.

\section{results}
The $2$-colorability of hypergraphs has been studied in the literature
and has lots of applications in the other area of combinatorics.
\begin{thm}{\rm \cite{mcdiarmid}}\label{hyper}
Let $H$ be a hypergraph in which every hyperedge contains at least $k$ points
and meets at most $d$ other hyperedges. If $e(d + 2)\leq 2^k$, then $H$ is $2$-colorable.
\end{thm}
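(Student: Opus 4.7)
The plan is to prove this by a probabilistic argument, specifically by applying the Lovász Local Lemma to a uniformly random 2-coloring of $V(H)$.

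First I would color every vertex of $H$ independently with color $0$ or $1$, each with probability $1/2$. For each hyperedge $f\in E(H)$, let $A_f$ denote the bad event that $f$ is monochromatic. Since $|f|\geq k$, only $2$ of the $2^{|f|}$ colorings of $f$ are monochromatic, so
$$\Pr[A_f]=2^{1-|f|}\leq 2^{1-k}.$$

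Next I would pin down the dependency structure. The event $A_f$ is a function of the random colors of the vertices in $f$ alone, so it is mutually independent of every $A_g$ with $g\cap f=\emptyset$. By hypothesis $f$ meets at most $d$ other hyperedges, so each $A_f$ is mutually independent of all but at most $d$ of the remaining bad events.

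At this stage I would invoke the Lovász Local Lemma in its general (asymmetric) form, assigning weights $x_f$ to each event and verifying $\Pr[A_f]\leq x_f\prod_{g\sim f}(1-x_g)$. With the natural uniform choice $x_f=1/(d+2)$, the standard estimate $(1-1/(d+2))^d\geq e^{-d/(d+1)}$ reduces the LLL hypothesis to an inequality of the form $c\cdot e\cdot(d+\text{const})\leq 2^k$, which under the hypothesis $e(d+2)\leq 2^k$ is then checked directly. The conclusion is that $\Pr[\bigcap_f\overline{A_f}]>0$, so $H$ admits a proper 2-coloring.

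The conceptual content is entirely standard (random coloring plus LLL), so the only nontrivial step I anticipate is matching the constant $(d+2)$ of the hypothesis precisely, rather than the $2(d+1)$ that drops out of the plain symmetric form $ep(d+1)\leq 1$ with $p=2^{1-k}$. This gap is small and should be closed either by the asymmetric LLL with tailored weights or by the slightly sharper variants in the spirit of Radhakrishnan--Srinivasan. Beyond this bookkeeping I do not foresee any obstacle.
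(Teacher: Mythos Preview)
The paper does not prove this theorem at all; it is quoted verbatim from McDiarmid \cite{mcdiarmid} and used as a black box. So there is no in-paper argument to compare your proposal against.

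On the merits: your approach---random $2$-coloring plus the Lov\'asz Local Lemma---is precisely McDiarmid's, and your outline is correct in substance. You have also correctly flagged the one nontrivial point: the symmetric LLL with $p=2^{1-k}$ and dependency degree $d$ gives only $2e(d+1)\le 2^k$, not $e(d+2)\le 2^k$. However, your proposed fix does not close that gap. With $x_f=1/(d+2)$ the LLL condition becomes $2^{1-k}\le \tfrac{1}{d+2}\bigl(1-\tfrac{1}{d+2}\bigr)^{d}$, and your own estimate $(1-1/(d+2))^d\ge e^{-d/(d+1)}$ turns this into $2(d+2)e^{d/(d+1)}\le 2^k$, which is still essentially $2e(d+2)\le 2^k$; the extra factor of $2$ coming from the two monochromatic colorings does not disappear. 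Getting down to $e(d+2)$ requires the sharper form of the local lemma that McDiarmid develops in the cited paper, not just a different choice of weights in the standard asymmetric version. For the purposes of the present paper this is immaterial---every application here would go through with $2e(d+1)$ in place of $e(d+2)$---but as a proof of the theorem \emph{as stated} your sketch leaves exactly the constant-factor gap you anticipated, and the tool you name does not remove it.
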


Assume that $G$ is a graph. Let $T\subseteq V(G)$ and define a hypergraph $H_G(T)$ whose vertex set is $\bigcup_{v\in T}N(v)$
and its  hyperedge set is defined as follows
$$E\left(H_G(T)\right)\isdef\left\{N(v)|v\in T \right\}.$$
Clearly, for any $f\in E(H_G(T))$, $\delta(G)\leq |f|\leq \Delta(G)$
and $\Delta(H_G(T))\leq\Delta(G)$. Therefore $f$ meets
at most $\Delta(G)(\Delta(G) -1)$ other hyperedges.

It was shown by Thomassen \cite{thomassen} that for any $k$-uniform and
$k$-regular hypergraph $H$, if $k\geq 4$, then $H$ is 2-colorable.
This result can be easily extended to all $k$-uniform hypergraphs with the maximum
degree at most $k$ \cite{MR2746973}, i.e.,
any $k$-uniform hypergraph $H$ with $k\geq4$ and the maximum
degree at most $k$, is 2-colorable.
By considering Theorem \ref{hyper}, if $e(\Delta^2(G)-\Delta(G)+2)\leq2^{\delta(G)}$ (in the $k$-regular case, $k\geq 4$), then $H_G(T)$
is 2-colorable.

Next lemma is proved in \cite{akbariliaghat} and extended to circular coloring in \cite{metath}.
\begin{lem}{\rm \cite{akbariliaghat}}\label{akbari2}
Let $G$ be a connected graph, and let $c$ be a $\chi(G)$-coloring of $G$. Moreover, assume that
$H$ is a nonempty subgraph of $G$. Then there exits a $\chi(G)$-coloring $f$ of $G$ such that:
\begin{enumerate}
\item[a{\rm)}] if $v\in V(H)$, then $f(v)=c(v)$ and
\item[b{\rm)}] for every vertex $v\in V(G)\setminus V(H)$ there is a
 path $v_0v_1 \ldots v_m$ such that $v_0=v$, $v_m$ is in $H$, and $f(v_{i+1})-f(v_i)=1\ \pmod{\chi(G)}$ for $i=0,1, \ldots, m-1$.
\end{enumerate}
\end{lem}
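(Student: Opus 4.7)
My plan is an extremal argument. Write $k = \chi(G)$, and for each proper $k$-coloring $f$ of $G$ let $W(f)$ denote the set of vertices from which no color-increasing path (colors incrementing by $1 \pmod{k}$) reaches $V(H)$; the conclusion of the lemma amounts to producing an admissible $f$---i.e.\ one with $f|_{V(H)} = c|_{V(H)}$---for which $W(f) = \emptyset$. I would pick an admissible $f$ minimizing $|W(f)|$, assume for contradiction that $W(f) \neq \emptyset$, and build another admissible coloring $f'$ with $|W(f')| < |W(f)|$ by modifying $f$ only on the set $W(f)$, contradicting the choice of $f$.

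Two preparatory observations set the stage. First, every suffix of an increasing path is itself an increasing path, so every witness for a vertex in $V(G) \setminus W(f)$ lies entirely inside $V(G) \setminus W(f)$; consequently, any proper coloring $f'$ that agrees with $f$ on $V(G) \setminus W(f)$ automatically satisfies $W(f') \subseteq W(f)$. Second, for every boundary edge $wu$ with $w \in W(f)$ and $u \notin W(f)$, properness of $f$ gives $f(u) \not\equiv f(w) \pmod{k}$ and the definition of $W(f)$ gives $f(u) \not\equiv f(w) + 1 \pmod{k}$ (otherwise $u$ would lend its increasing path to $w$). By connectivity of $G$, together with $\emptyset \neq V(H) \subseteq V(G) \setminus W(f)$ and $W(f) \neq \emptyset$, at least one boundary edge exists, so the set $S$ of residues $f(u) - f(w) \pmod{k}$ realized on boundary edges is a nonempty subset of $\{2, 3, \ldots, k-1\}$.

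The single clean move is now to let $s = \min S$ and define $f'(w) = f(w) + s - 1 \pmod{k}$ for $w \in W(f)$ and $f'(v) = f(v)$ otherwise. The minimality $s = \min S$ ensures $s - 1 \notin S$, so no boundary edge becomes monochromatic under $f'$; properness inside $W(f)$ and inside $V(G) \setminus W(f)$ is automatic, so $f'$ is a proper admissible coloring. Picking any boundary edge realizing the minimum $s$ gives $f'(u) - f'(w) \equiv 1 \pmod{k}$, so the corresponding $w$ now has an $f'$-increasing path to $V(H)$ through $u$; thus $w \in W(f) \setminus W(f')$, which together with the earlier $W(f') \subseteq W(f)$ yields $W(f') \subsetneq W(f)$, the desired contradiction.

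The main point of care, and the step that drives the whole argument, is the choice of shift amount: a naive $+1$ shift is always proper but only progresses when $2 \in S$, whereas $\min S - 1$ simultaneously secures properness (it avoids $S$) and strict progress (its $+1$ lies in $S$). Everything else---preservation of ``good'' paths, existence of boundary edges, properness of the shifted coloring---is routine once this choice has been made.
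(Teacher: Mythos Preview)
The paper does not prove this lemma itself; it is quoted from \cite{akbariliaghat} and invoked as a black box in the proof of Theorem~\ref{newthm}, so there is no in-paper argument to compare against.

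That said, your extremal argument is correct. Choosing an admissible $f$ minimizing $|W(f)|$ and then uniformly shifting the colors on $W(f)$ by $s-1$, where $s=\min S$, is exactly the right move: the observation that $s-1\notin S$ (since $S\subseteq\{2,\dots,k-1\}$ and $s-1<\min S$) guarantees properness across every boundary edge, while any boundary edge realizing the value $s$ becomes a $+1$ step under $f'$ and hence pulls its endpoint $w$ out of $W$, strictly shrinking it. Your first observation, that witnesses for good vertices lie entirely outside $W(f)$ and therefore survive any recoloring confined to $W(f)$, is the clean way to get $W(f')\subseteq W(f)$.

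One small point worth making explicit is the degenerate case $k=\chi(G)\le 2$: there the range $\{2,\dots,k-1\}$ is empty, but in a proper $2$-coloring every edge is automatically a $+1$ step modulo $2$, so connectivity gives $W(f)=\emptyset$ from the start and the lemma is immediate. Your reductio actually covers this implicitly (the derived conditions ``$S\neq\emptyset$'' and ``$S\subseteq\emptyset$'' already clash), but isolating it would make the write-up cleaner.
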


Assume that $G$ is a given graph. The graph $G^2$ is a graph with the vertex set $V(G)$ and two different
vertices $u$ and $v$ are adjacent in $G^2$ if $d_G(u,v)\leq 2$, i.e., 
there is a walk with length at most two between $u$ and $v$ in $G$.

The connection between the independence number and the dynamic chromatic 
number of graphs has been studied in \cite{ahdiak}. 
The first part of the next theorem improves a similar result in \cite{ahdiak} and
in the second part of it we present an upper bound for the dynamic chromatic number of 
graph $G$ in terms of chromatic number of $G$ and the independence
number of $G^2$.
\begin{thm}\label{newthm}
\begin{enumerate}
\item[{\rm 1)}] For any graph $G$ with $\chi(G)\geq 4$, $\chi_2(G)\leq \chi(G)+\alpha(G)$.
\item[{\rm 2)}] If $G$ is a $k$-regular graph with $\chi(G)\geq 4$, then
$\chi_2(G)\leq \chi(G)+\alpha(G^2)$.
\end{enumerate}
\end{thm}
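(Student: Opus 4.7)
The plan is to prove both parts within the same framework: start from a proper $\chi(G)$-coloring of $G$, fix a maximum independent set $I$ in the appropriate graph ($G$ for part~1, $G^2$ for part~2), and assign each vertex of $I$ a fresh color from $\{\chi(G)+1,\dots,\chi(G)+|I|\}$. The task then reduces to choosing the initial $\chi(G)$-coloring so that the combined coloring is dynamic.

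For part~2, let $S=\{v_1,\dots,v_s\}$ be a maximum independent set in $G^2$ with $s=\alpha(G^2)$. Two structural facts are crucial: since $S$ is independent in $G^2$, the vertices of $S$ are pairwise at distance at least three in $G$, so the closed neighborhoods $N[v_i]$ are pairwise disjoint; and since $S$ is maximal in $G^2$, every vertex of $G$ lies within distance two of some $v_i$. I would first apply Lemma~\ref{akbari2} with $H$ the subgraph induced on $S$ and any proper $\chi(G)$-coloring as input, obtaining a coloring $f$ that agrees with the input on $S$ and satisfies the path property: every $u\notin S$ has a neighbor of color $f(u)+1\pmod{\chi(G)}$. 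I would then define $f^*$ by recoloring each $v_i$ with the fresh color $\chi(G)+i$; this uses $\chi(G)+\alpha(G^2)$ colors and is automatically proper, because the fresh colors are disjoint from the original palette and the $v_i$'s are pairwise non-adjacent.

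To verify $f^*$ is dynamic, I would examine any $u$ with $\deg(u)\ge 2$. If $u\notin S$ has a neighbor in $S$, it has exactly one such neighbor (else two $v_i$'s would be within distance two), so $N(u)$ contains a fresh color alongside at least one original color, giving at least two colors. If $u\notin S$ has no neighbor in $S$, then $u$ is at distance exactly two from $S$, and the path property forces any putative monochromatic color of $N(u)$ to be $f(u)+1\pmod{\chi(G)}$; tracing the color-increasing path one more step from a neighbor yields a constraint of the form $2\equiv 0\pmod{\chi(G)}$, contradicting $\chi(G)\ge 4$. The remaining case $u\in S$ is the heart of the matter: I must ensure that $N(v_i)$ is non-monochromatic under $f$ whenever $\deg(v_i)\ge 2$.

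The main obstacle is this last case. I would exploit the $k$-regularity by appealing to Thomassen's theorem (cited just after Theorem~\ref{hyper}): the hypergraph on vertex set $\bigcup_i N(v_i)$ with hyperedges $\{N(v_i):v_i\in S\}$ is $k$-uniform with maximum degree at most $k$, hence is $2$-colorable when $k\ge 4$. The resulting partition $\bigcup_i N(v_i)=X_1\cup X_2$, with each $N(v_i)$ meeting both parts, would be used to refine the initial $\chi(G)$-coloring (by re-invoking Lemma~\ref{akbari2} if needed) so that at each $v_i$ some vertex of $X_1\cap N(v_i)$ and some vertex of $X_2\cap N(v_i)$ receive different original colors. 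The edge case $k=3$ forces $G=K_4$ by Brooks' theorem and is handled by direct inspection. Part~1 follows from the same plan using a maximum independent set in $G$ itself: the analog of the second case vanishes, because by maximality every vertex outside $I$ has a neighbor in $I$, and the remaining case is handled by the same hypergraph 2-colorability argument.
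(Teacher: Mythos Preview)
Your plan has a genuine gap in each of its three critical cases, and the gap is structural: you recolor the vertices of the independent set $S$ (resp.\ $I$) themselves, whereas the obstruction to a dynamic coloring lives in the \emph{neighborhoods} of bad vertices. Concretely, for $u=v_i\in S$ the set $N(v_i)$ is disjoint from $S$ (since $S$ is independent in $G$), so $N(v_i)$ keeps its $f$-colors in $f^*$; nothing you have done prevents $N(v_i)$ from being monochromatic under $f$. Your proposed repair via the hypergraph $2$-coloring $(X_1,X_2)$ does not work: that partition is defined with no reference to $f$, so knowing that each $N(v_i)$ meets both $X_1$ and $X_2$ says nothing about whether $X_1\cap N(v_i)$ and $X_2\cap N(v_i)$ receive different $f$-colors, and Lemma~\ref{akbari2} gives no handle on prescribing colors to chosen vertices outside $H$. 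For Part~1 the same repair is invoked, but Part~1 has no regularity hypothesis, so the hypergraph of neighborhoods need not be $k$-uniform and the $2$-colorability result is unavailable. Finally, in the case ``$u\notin S$ with no neighbor in $S$'' your contradiction $2\equiv 0\pmod{\chi(G)}$ does not follow: the path property only guarantees that \emph{one} neighbor of $u$ has color $f(u)+1$; it does not forbid all of $N(u)$ from having that color, and stepping once more along the path leads to a vertex of color $f(u)+2$ that is not required to lie in $N(u)$.

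The paper proceeds in the opposite direction. It applies Lemma~\ref{akbari2} with $H$ a single edge, then proves (using $\chi(G)\ge 4$) that the set $S$ of \emph{bad} vertices---those whose neighborhood is monochromatic under $f$---is itself independent in $G$. Fresh colors are then placed on \emph{neighbors} of bad vertices, not on a pre-chosen independent set: for Part~1, one neighbor per bad vertex gets its own new color, costing $|S|\le\alpha(G)$ colors; for Part~2, one looks at the components of $G^2[S]$, applies the $2$-coloring to the hypergraph of neighborhoods within each component, and recolors one side of that $2$-coloring with a single new color per component, costing at most $\alpha(G^2)$ colors. The $2$-coloring is thus used to \emph{introduce} a fresh color on part of each bad neighborhood, not to massage the original $\chi(G)$-coloring.
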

\begin{proof}{
Let $uv$ be an edge of $G$. Assume that $c$ is a $\chi(G)$-coloring of $G$ such that $c(u)=1$ and $c(v)=3$.
Let $H$ be the subgraph induced on the edge $uv$ and $f$ be a coloring as in Lemma~\ref{akbari2}.
According to Lemma \ref{akbari2}, $f(u)=1$ and $f(v)=3$. We call a vertex $v\in V(G)$, a {\it bad} vertex if
${\rm deg}(v)\geq 2$ and all vertices in $N(v)$ have the same color in $f$.
Let $S$ be the set of all bad vertices. We claim that $S$ is an independent set in $G$.
Suppose therefore (reductio ad absurdum) that this is not the case. We consider four different cases.
\begin{enumerate}
\item $\{u,v\}\subset S$. Since $u$ and $v$ are both bad vertices and $uv\in E(G)$, all the vertices in $N(u)$ have the color $3$ and all the vertices in $N(v)$ have the color $1$. Note that $\chi(G)\geq 4$. Therefore the coloring $f$ does~not satisfy Lemma \ref{akbari2} and it is a contradiction.

\item There exists a vertex $x\neq v$ such that $\{u,x\}\subseteq S$ and $ux\in E(G)$. Since $v\in N(u)$ and $u\in N(x)$ and both $u$ and $x$ are bad, all the vertices in $N(u)$ and $N(x)$ have the colors $3$ and $1$ in the coloring $f$, respectively. According to Lemma \ref{akbari2}, there is a path $v_0v_1\ldots v_m$ in $G$ such that $v_0=x$, $v_m\in V(H)$, and
      $f(v_{i+1})-f(v_i)=1\ \pmod{\chi(G)}$ for $i=0,1, \ldots, m-1$. But, this is~not possible, because all the neighbors of $x$ have the color $1$ and $x$ has the color $3$.

\item There exists a vertex $y\neq u$ such that $\{v,y\}\subseteq S$ and $vy\in E(G)$. This case is the same as the previous case.

\item There are two vertices $x$ and $y$ in $S\setminus \{u,v\}$ such that $xy\in E(G)$.
      Note that according to Lemma~\ref{akbari2}, there should be at least two vertices $z\in N(x)$ and $z'\in N(y)$ such that $f(z)=f(x)+1$ and $f(z')=f(y)+1$ $({\rm mod} \chi)$. Since $x$ and $y$ are both bad vertices,
      all the vertices in $N(x)$ have the color $f(y)$ and all the vertices  in $N(y)$ have the color $f(x)$.
      Therefore $f(z)=f(y)$ and $f(z')=f(x)$ ${\rm( mod}\ \chi(G){\rm)}$, but this is~not possible because $\chi(G)\geq 4$.
\end{enumerate}
Now, we know $S$ is an independent set in $G$ and so $|S|\leq \alpha(G)$. For any vertex $w\in S$,
choose a vertex $x(w)\in N(w)$ and put all these vertices in $S'$. Assume that $S'=\{x_1,x_2,\ldots,x_t\}$. Consider a coloring $f'$ such that
$f$ and $f'$ are the same on $V(G)\setminus S'$  and for any $x_i\in S'$,
$x_i$ is colored with $\chi(G)+i$. One can easily check that $f'$ is a dynamic coloring of $G$ used at most $\chi(G)+\alpha(G)$ colors.

To prove the second part, assume that $G$ is a $k$-regular graph with $\chi(G)\geq 4$.
Consider the coloring $f$ and the set $S$ as in the previous part.
Assume that $G^2[S]$, i.e., the induced subgraph of $G^2$ on the vertices in $S$, has the components $G^2_1,G^2_2,\ldots,G^2_n$.
Note that two different vertices $x,y\in S$ are adjacent in $G^2$ if and only if $N_G(x)\cap N_G(y)\neq \varnothing$
(since $S$ is an independent set, $xy\not\in E(G)$).
Therefore for any $1\leq i\leq n$, all the vertices in
$$N_i=\bigcup_{x\in V(G^2_i)}N_G(x)$$
have the same color in the coloring $f$. For any $1\leq i\leq n$,
let $H_i$ be a hypergraph with the vertex set $N_i$ and with the hyperedge set
$$E(H_i)=\{N(x)|\ x\in V(G^2_i)\}.$$ It is clear that $H_i$ is a $k$-uniform hypergraph with $\Delta(H_i)\leq k$.
Since $\chi(G)\geq 4$, we have $k\geq 4$ or $G=K_4$. If $G=K_4$, then $\chi_2(G)=4\leq\chi(G)+\alpha(G^2)$ and there is nothing to prove.
Now, we can assume that $k\geq 4$.
According to the discussion after Theorem \ref{hyper}, $H_i$ is $2$-colorable.
For any $1\leq i\leq n$, let $(X^1_i,X^2_i)$ be a $2$-coloring of $H_i$.
Define $f''$ to be a coloring of $G$ such that $f''$ and $f$ are the same on $V(G)\setminus(\bigcup X^1_i)$ and for each
$1\leq i\leq n$, $f''$ has the constant value $i+\chi(G)$ on the vertices of $X^1_i$.
It is easy to see that $f''$ is a $(\chi(G)+n)$-dynamic coloring of $G$. Obviously, $n\leq \alpha(G^2)$ and the proof is completed.
}\end{proof}
In the proof of the second part of Theorem~\ref{newthm}, we need the $2$-colorability of all $H_i$'s and if some assumptions cause this property,
then the remain of proof still works. Consequently, in view of the discussion after Theorem \ref{hyper}, we have the next corollary.
\begin{cor}
Let $G$ be a graph such that $\chi(G)\geq 4$ and $e(\Delta^2(G)-\Delta(G)+1)\leq2^{\delta(G)}$. Then $\chi_2(G)\leq \chi(G)+\alpha(G^2).$
\end{cor}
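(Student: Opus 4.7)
The plan is to rerun the proof of Theorem~\ref{newthm}(2) almost verbatim, replacing only the place where $k$-regularity was used. There the argument needed each auxiliary hypergraph $H_i$ to admit a proper $2$-coloring; for this it invoked a Thomassen-type statement that every $k$-uniform hypergraph with maximum degree at most $k\ge 4$ is $2$-colorable. The hypothesis of the corollary is designed so that Theorem~\ref{hyper} can play the same role without any uniformity assumption.

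First, I would pick an edge $uv\in E(G)$ and a $\chi(G)$-coloring $c$ with $c(u)=1$ and $c(v)=3$; applying Lemma~\ref{akbari2} to $H=G[\{u,v\}]$ yields a $\chi(G)$-coloring $f$ of $G$. The four-case analysis in the proof of Theorem~\ref{newthm} uses only $\chi(G)\ge 4$ (and never regularity), so the same argument shows that the set $S$ of bad vertices of $f$ is independent in $G$.

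Next, let $G^2_1,\dots,G^2_n$ be the components of $G^2[S]$, set $N_i=\bigcup_{x\in V(G^2_i)}N_G(x)$, and form the hypergraph $H_i$ on $N_i$ with hyperedge set $\{N_G(x):x\in V(G^2_i)\}$. Every hyperedge of $H_i$ has cardinality at least $\delta(G)$ and, as already observed in the discussion preceding Lemma~\ref{akbari2}, meets at most $\Delta^2(G)-\Delta(G)$ other hyperedges. The hypothesis of the corollary is tailored exactly so that Theorem~\ref{hyper} applies to $H_i$, yielding a proper $2$-coloring $(X^1_i,X^2_i)$ of each $H_i$.

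Finally, I would mimic the recoloring step of Theorem~\ref{newthm}(2): define $f''$ to agree with $f$ outside $\bigcup_i X^1_i$ and to send every vertex of $X^1_i$ to the fresh color $\chi(G)+i$. The verification there (each $x\in V(G^2_i)$ has neighbors in both $X^1_i$ and $X^2_i$, so its neighborhood stops being monochromatic, while non-bad vertices retain the color diversity they already had under $f$) shows that $f''$ is a dynamic coloring of $G$ using at most $\chi(G)+n\le \chi(G)+\alpha(G^2)$ colors. The only content beyond Theorem~\ref{newthm}(2) is the observation that the degree and edge-meeting parameters of $H_i$ transfer directly from $G$, so that Theorem~\ref{hyper} becomes applicable in place of the regular-case $2$-colorability result; I do not expect any serious new obstacle.
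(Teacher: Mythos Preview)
Your proposal is correct and follows exactly the paper's approach: the paper simply remarks that in the proof of Theorem~\ref{newthm}(2) the only place regularity is invoked is to guarantee the $2$-colorability of each auxiliary hypergraph $H_i$, and that under the hypothesis $e(\Delta^2(G)-\Delta(G)+1)\le 2^{\delta(G)}$ Theorem~\ref{hyper} supplies this instead, after which ``the remain of proof still works.'' You have spelled out in detail precisely what the paper leaves implicit.
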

{\bf Remark.} Note that in the proof of Theorem \ref{newthm}, it is shown that for any $k$-regular graph
$G$ with $\chi(G)\geq 4$, $\chi_2(G)\leq \chi(G)+ {\rm com}(G^2[S])$
where ${\rm com}(G^2[S])$ is the number of connected components of $G^2[S]$ and $S$ is an
independent set given in the proof of Theorem \ref{newthm}. Therefore for any graph $G$
with $\chi(G)\geq 4$ and $e(\Delta^2(G)-\Delta(G)+1)\leq2^{\delta(G)}$ {\rm (}in $k$-regular case $k\geq 4${\rm )},
$$\chi_2(G)\leq \chi(G)+\displaystyle\max_{I\in {\cal I}(G)}{\rm com}(G^2[I]).$$

It is shown in \cite{akbariars} that if $G$ is a strongly regular graph except $C_5$ and $K_{m,m}$,
then $\chi_2(G)-\chi(G)\leq 1$. Note that for a graph $G$ with diameter $2$,
the graph $G^2$ is a complete graph and $\alpha(G^2)=1$. Therefore the second
part of Theorem \ref{newthm} extends this result to a larger family of regular graphs.
In fact, every strongly regular graph has diameter at most 2, but
according to the second part of Theorem~\ref{newthm}, if $G$ is a $k$-regular
graph with diameter $2$ and $\chi(G)\geq 4$, then $\chi_2(G)-\chi(G)\leq 1$.
Moreover, by the previous corollary, if $G$ is a graph with diameter $2$, $\chi(G)\geq 4$
and $e(\Delta^2(G)-\Delta(G)+1)\leq2^{\delta(G)}$, then $\chi_2(G)-\chi(G)\leq 1$.
We restate this result in the next corollary.
\begin{cor}
Let $G$ be a graph with diameter $2$, $\chi(G)\geq 4$ and $e(\Delta^2(G)-\Delta(G)+1)\leq2^{\delta(G)}$ {\rm (}in $k$-regular case $k\geq 4${\rm )}.
Then $\chi_2(G)-\chi(G)\leq 1$
\end{cor}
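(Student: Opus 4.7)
The plan is to reduce the statement to the previous corollary together with one trivial observation about $G^2$. Under exactly the hypotheses $\chi(G)\geq 4$ and $e(\Delta^2(G)-\Delta(G)+1)\leq 2^{\delta(G)}$, the previous corollary already delivers
$$\chi_2(G)\leq \chi(G)+\alpha(G^2).$$
So the only new content to supply is that the diameter-$2$ assumption collapses $\alpha(G^2)$ to $1$.

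The key step is straightforward. If $G$ has diameter at most $2$, then for any two distinct vertices $u,v\in V(G)$ one has $d_G(u,v)\leq 2$, which by the definition of $G^2$ given in the paper means $u$ and $v$ are adjacent in $G^2$. Hence $G^2$ is the complete graph on $V(G)$, giving $\alpha(G^2)=1$. Substituting into the previous corollary yields $\chi_2(G)\leq \chi(G)+1$, which is exactly the claimed inequality.

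There is really no obstacle here; the assertion is an immediate specialization of the earlier corollary once the observation $\alpha(G^2)=1$ is in hand. The parenthetical ``$k$-regular case $k\geq 4$'' clause is justified in the same way as in the previous corollary: in the regular case the hypergraphs $H_i$ appearing in the proof of Theorem \ref{newthm} are $k$-uniform with maximum degree at most $k$, and by the discussion following Theorem \ref{hyper} they are automatically $2$-colorable when $k\geq 4$, so the side condition $e(\Delta^2(G)-\Delta(G)+1)\leq 2^{\delta(G)}$ is not needed in that regime. Thus I would simply write two lines: invoke the previous corollary, then observe $\alpha(G^2)=1$ from the diameter hypothesis.
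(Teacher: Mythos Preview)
Your proposal is correct and matches the paper's own justification essentially verbatim: the paper also observes that diameter $2$ forces $G^2$ to be complete, hence $\alpha(G^2)=1$, and then invokes the previous corollary (or Theorem~\ref{newthm}(2) in the regular case) to conclude $\chi_2(G)\leq\chi(G)+1$.
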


The proof of the second part of Theorem \ref{newthm} strongly depended on the assumption that $\chi(G)\geq 4$. In fact the only bipartite regular graphs with diameter 2 are complete regular bipartite graphs whose chromatic number and dynamic chromatic number are $2$ and $4$, respectively. But $K_{m,m}^2$ is a complete graph and so $\chi(K_{m,m})+\alpha(K_{m,m})=3<\chi_2(K_{m,m})$. For the case of $\chi(G)=3$, if we set $G=C_5$, then
$C_5^2=K_5$ and $\chi_2(C_5)=5>\chi(C_5)+\alpha(C_5^2)$.

Note that in the proof of Theorem~\ref{newthm}, we assumed that $\chi(G)\geq 4$ because
we want to use Lemma \ref{akbari2} to obtain a coloring $f$ such that all the bad vertices related to $f$ form an independent set in $G$.
However, if one finds a $t$-coloring $f$ of $G$ such that the set of bad vertices related to $f$, $S$, is an independent set in $G$, then $\chi_2(G)\leq t+\alpha(G)$ and if $G$ is a $k$-regular graph with $k\geq 4$, then $\chi_2(G)\leq t+{\rm com}(G^2[S])$.

Now, let $G$ be a graph such that $e(\Delta^2(G)-\Delta(G)+1)\leq2^{\delta(G)}$ {\rm (}in $k$-regular case, $k\geq 4${\rm )}
and let $I$ be an arbitrary maximal independent set in $G$. Consider an optimum $t$-coloring $c$ of $G$ such that $I$ is a
color class in this coloring($t$ is the least possible number).
Define $H$ to be a hypergraph with vertex set $I$ and the hyperedge set $E(H)=\{N(v)|\ v\in V(G)\ \&\ N(v)\subseteq I\}$.
Since $e(\Delta^2(G)-\Delta(G)+1)\leq2^{\delta(G)}$ (in $k$-regular case, $k\geq 4$), $H$ is $2$-colorable.
Let $(X,Y)$ be a $2$-coloring of $H$. Recolor the vertices in $Y$ with a new color $t+1$ to obtain a $(t+1)$-coloring $f$ of $G$.
It is readily seen that $S$, the set of the bad vertices related to $f$, is a subset of $I$ and therefore it is an independent set.
By the same argument as in the proof of the second part of Theorem \ref{newthm}, one can show that $\chi_2(G)\leq t+1+{\rm com}(G^2[S])$.
Now, note that $t\leq \chi(G)+1$ and so $\chi_2(G)\leq t+1+{\rm com}(G^2[S])\leq \chi(G)+2+\displaystyle\max_{P\subseteq I}{\rm com}(G^2[P])$.
Let ${\cal IM}(G)$ be the set of all maximal independent sets in $G$.
Since $I$ is an arbitrary maximal independents set in $G$,
$$\chi_2(G)\leq \chi(G)+\displaystyle\min_{I\in{\cal IM}(G)}\max_{P\subseteq I}{\rm com}(G^2[P])+2.$$

In Theorem \ref{newthm}, we have the assumption $\chi(G)\geq 4$ and for a graph $G$ with $\chi(G)<4$, we can not use this theorem.
In view of the above discussion, if we consider $c$ as a $\chi(G)$-coloring of $G$
such that the color class $V_1$ (all the vertices with color $1$) is a maximal independent set in $G$($t=\chi(G)$), then $\chi_2(G)\leq \chi(G)+\alpha(G)+1$ and also, we have the next corollary.
\begin{cor}
Let $G$ be a graph such that $e(\Delta^2(G)-\Delta(G)+1)\leq2^{\delta(G)}$ {\rm (}in $k$-regular case, $k\geq 4${\rm )}.
Then $\chi_2(G)\leq \chi(G)+\alpha(G^2)+1.$
\end{cor}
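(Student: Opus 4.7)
The plan is to extend the component--recoloring argument from the second part of Theorem~\ref{newthm} so that it no longer requires $\chi(G)\geq 4$, by bootstrapping with a coloring built around a maximal independent set.

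First I would fix a maximal independent set $V_1\subseteq V(G)$ together with a $\chi(G)$--coloring $c$ of $G$ in which $V_1$ is the color class of color $1$. Such a coloring exists: beginning from any optimum coloring, one can iteratively recolor to $1$ every vertex that currently has no neighbor in color class $1$; this preserves propriety and the number of colors, and the class eventually becomes a maximal independent set which can be taken to be $V_1$. Set $t=\chi(G)$. Form the hypergraph $H$ on vertex set $V_1$ whose hyperedges are the neighborhoods $N(v)$, $v\in V(G)$, that happen to be contained in $V_1$. Each such hyperedge has at least $\delta(G)$ vertices and meets at most $\Delta(G)(\Delta(G)-1)$ other hyperedges, so the hypothesis $e(\Delta^2(G)-\Delta(G)+1)\leq 2^{\delta(G)}$ (or, in the $k$-regular case with $k\geq 4$, the extension of Thomassen's theorem recalled after Theorem~\ref{hyper}) shows that $H$ is $2$-colorable. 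Let $(X,Y)$ be such a $2$-coloring and define $f$ from $c$ by recoloring every vertex of $Y$ with a brand--new color $t+1$; the result is a proper $(t+1)$--coloring of $G$.

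Next I would verify that the set $S$ of bad vertices for $f$ lies entirely in $V_1$. For any $v\notin V_1$, maximality of $V_1$ forces $v$ to have a neighbor in $V_1$, whose color is $1$ or $t+1$, while any neighbor of $v$ outside $V_1$ keeps its original color in $\{2,\ldots,t\}$. Consequently $v$ can be bad only when $N(v)\subseteq V_1$, but in that case $N(v)$ is a hyperedge of $H$ meeting both $X$ and $Y$, so $v$ actually sees two colors in $f$, a contradiction. Hence $S\subseteq V_1$, and in particular $S$ is independent in $G$.

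At this point I would re-run the component step from the proof of Theorem~\ref{newthm}(2): decompose $G^2[S]$ into components $G^2_1,\ldots,G^2_n$; since $S$ has no $G$-edges, two vertices $x,y\in S$ with $xy\in E(G^2)$ must share a common $G$-neighbor, and so on each $N_i=\bigcup_{x\in V(G^2_i)}N(x)$ the coloring $f$ is forced to be constant. Apply Theorem~\ref{hyper} to the hypergraph $H_i$ on $N_i$ with hyperedges $\{N(x):x\in V(G^2_i)\}$, which inherits the same uniformity and degree bounds and is therefore $2$-colorable; recoloring one side of each $H_i$ with a fresh color $t+1+i$ yields a dynamic coloring in $t+1+n\leq\chi(G)+1+\alpha(G^2)$ colors, since $n\leq\alpha(G^2)$. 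The one substantive point beyond the theorem--level argument is the containment $S\subseteq V_1$: this is what lets us take $t=\chi(G)$ instead of $\chi(G)+1$ and thereby save exactly the color that separates the general bound $\chi(G)+\alpha(G^2)+2$ from the sharper $\chi(G)+\alpha(G^2)+1$ asserted by the corollary.
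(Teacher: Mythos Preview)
Your argument is correct and follows essentially the same route as the paper: start from a $\chi(G)$-coloring whose first class $V_1$ is a maximal independent set, $2$-color the hypergraph of neighborhoods contained in $V_1$ to split it with one new color, observe that all bad vertices now lie in $V_1$, and then rerun the component-by-component recoloring from Theorem~\ref{newthm}(2). The paper presents this as a specialization of the preceding discussion (taking $t=\chi(G)$ rather than the general $t\leq\chi(G)+1$), and you have merely spelled out the two steps the paper calls ``readily seen'', namely the existence of such a $\chi(G)$-coloring and the containment $S\subseteq V_1$.
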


Erd\H os and Lov{\'a}sz \cite{ELlocallemma} proved a very powerful lemma, known as the Lov{\'a}sz Local Lemma.\\
{\bf The Lov{\'a}sz Local Lemma}.\cite{MR2437651}
Let $A_1,A_2,\ldots,A_n$ be evens in an arbitrary probability space. Suppose that each event $A_i$
is mutually independent of a set of all  $A_j$ but at most $d$ of the other events and ${\rm Pr}(A_i)\leq p$
for all $1\leq i\leq n$. If $ep(d+1)\leq 1$ then $\displaystyle{\rm Pr}(\bigcap_{i=1}^n\bar{A_i})>0$

It was proved in \cite{MR2746973} that for any $k$-regular graph $G$, the difference between dynamic chromatic number and chromatic number of $G$ is at most $14.06\ln k +1$. In the next theorem we shall improve this result.
\begin{thm}
For any $k$-regular graph $G$, $\chi_2(G)-\chi(G)\leq 6\ln k+2$.
\end{thm}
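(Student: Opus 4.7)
The plan is to construct a double total dominating set $T\subsetneq V(G)$ with $\Delta(G[T])\le r-1$, where $r=\lceil 6\ln k+2\rceil$, and then use $T$ to boost a proper $\chi(G)$-coloring into a dynamic coloring with $\chi(G)+r$ colors. Concretely, given such $T$, I fix any proper $\chi(G)$-coloring $c$ of $G$, greedily properly color $G[T]$ with the $r$ fresh colors $\chi(G)+1,\ldots,\chi(G)+r$ (possible because $\chi(G[T])\le\Delta(G[T])+1\le r$), and retain $c$ on $V(G)\setminus T$. The two palettes are disjoint, so the combined coloring is proper. For every vertex $v$ of degree at least $2$, total-domination by $T$ supplies a neighbor with a new (hence $>\chi(G)$) color, while total-domination by $V(G)\setminus T$ supplies a neighbor with an original (hence $\le\chi(G)$) color; these two colors differ, so $N(v)$ receives at least two colors and the coloring is dynamic. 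Thus the problem reduces to producing such a $T$.

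To produce $T$, I apply the Lovász Local Lemma to the random set obtained by placing each vertex of $G$ in $T$ independently with probability $p=(2\ln k+c_0)/k$ for a suitable constant $c_0$. For each $v\in V(G)$ consider the three bad events
\[
A_v:\ N(v)\cap T=\varnothing,\qquad A'_v:\ N(v)\subseteq T,\qquad B_v:\ |N(v)\cap T|\ge r.
\]
Avoiding all $A_v$ and $A'_v$ is exactly double total domination; avoiding all $B_v$ is exactly $\Delta(G[T])\le r-1$. Since each event depends only on the $k$ Bernoulli indicators attached to $N(v)$, two events are mutually independent whenever the corresponding vertices are at distance greater than $2$ in $G$; hence, by $k$-regularity, every bad event is mutually independent of all but at most $3(k^2+1)$ of the others.

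Standard estimates give $\Pr(A_v)\le e^{-pk}=e^{-c_0}/k^2$, $\Pr(A'_v)=p^k$ (negligible), and a Chernoff-type bound $\Pr(B_v)\le(ekp/r)^{r}(1-p)^{k-r}/(1-kp/r)$. For $p=(2\ln k+c_0)/k$ and $r=\lceil 6\ln k+2\rceil$, the decisive factor $(ekp/r)^r e^{-pk}$ simplifies, up to constants, to $k^{-2-\epsilon}$ with $\epsilon=6(\ln 3-1)\approx 0.59>0$; this strict positivity is exactly what fixes the constant $6$ and forces the Chernoff tail below the LLL threshold. Choosing $c_0$ large enough that all three bad events have probability at most $1/(e(3k^2+4))$, the symmetric Lovász Local Lemma guarantees a positive probability that no bad event occurs, so the desired $T$ exists. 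The main obstacle is this delicate numerical balance: the constant $6$ is essentially the smallest integer making $\epsilon>0$, so any sharpening would require either a tighter dependency count or a two-phase probabilistic construction. Residual small values of $k$ fall outside the asymptotic regime and are absorbed by the elementary bound $\chi_2(G)\le\Delta(G)+1$ of \cite{MR1991048}, which implies $\chi_2(G)-\chi(G)\le 6\ln k+2$ directly whenever $k$ is small.
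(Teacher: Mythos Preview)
Your approach is sound and reaches the stated bound, but it is genuinely different from the paper's. The paper does not sample vertices independently; instead it draws $l=\lceil 3\ln k+1\rceil$ random permutations $\sigma_i$ of $V(G)$, lets $I_{\sigma_i}$ be the set of vertices preceding all their neighbours under $\sigma_i$, and applies the Local Lemma only to the events $\{N(u)\cap\bigcup_i I_{\sigma_i}=\varnothing\}$ for vertices $u$ lying in \emph{no} triangle (vertices in a triangle need nothing, since two neighbours in a triangle are adjacent and hence differently coloured). The resulting $T=\bigcup_i I_{\sigma_i}$ is a union of $l$ independent sets, so $\chi(G[T])\le l$ comes for free; but the paper does \emph{not} forbid $N(v)\subseteq T$, and instead handles that case by noting that the hypergraph $\{N(v):N(v)\subseteq T\}$ is $k$-uniform with maximum degree at most $k$, hence $2$-colourable by Thomassen's theorem, and uses this $2$-colouring as a second coordinate to refine the colouring of $T$ to $2l$ colours. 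Your route trades the permutation trick and the hypergraph step for a single Bernoulli--LLL step that simultaneously enforces double total domination and a degree cap; this is more elementary, but it spends the full $6\ln k$ on the degree bound, whereas the paper obtains $\chi(G[T])\le 3\ln k+O(1)$ and only incurs the factor $2$ at the hypergraph stage.

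Two small repairs are needed. First, ``choosing $c_0$ large enough'' is misleading: raising $c_0$ shrinks $\Pr(A_v)$ but \emph{increases} $\Pr(B_v)$, so you must fix $c_0$ in a bounded window (large enough for $A_v$, then take $k$ large so that the $k^{-2-\epsilon}$ decay handles $B_v$). Second, the crossover between the LLL regime and the elementary bound is tight: $\chi_2(G)\le k+1$ gives $\chi_2-\chi\le k-1\le 6\ln k+2$ only for $k\le 21$, and the Chernoff estimate you quote is too loose to make the LLL fire at $k=22$; there you need the exact binomial tail (which is amply small) or a sharper inequality. Neither point affects the overall strategy.
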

\begin{proof}{
It is proved in \cite{MR2746973}, that for any regular graph $G$, $\chi_2(G)\leq 2\chi(G)$.
Therefore for any $k$-regular graph $G$ with $k\leq 3$, $\chi_2(G)\leq 6\leq 6\ln k+2$.
Now, we can assume that $k\geq 4$.
Let $V(G)=\{v_1,v_2,\ldots,v_n\}$. For any permutation (total ordering) $\sigma\in S_{V(G)}$, set
$$I_\sigma=\left\{v\in V(G)|\ v\prec_{\sigma}u\ {\rm for\ all}\ u\in N(v)\right\}.$$
It is readily seen that $I_\sigma$ is an independent set of $G$.
Assume that $U\subseteq V(G)$ consists of all vertices that are~not
lied in any triangle. Now, choose $l$ permutations $\sigma_1,\ldots,\sigma_l$,
randomly and independently. For any $u\in U$, let $A_u$ be the event that
there are~not a vertex $v\in N(u)$ and $\sigma_i$ such that the vertex $v$ precedes
all of its neighbors in the permutation $\sigma_i$, i.e., $N(u)\bigcap \displaystyle\bigcup_{i=1}^lI_{\sigma_i}=\varnothing$.
Since $u$ dose not appear in any triangle,
one can easily see that ${\rm Pr}(A_u)=(1-{1\over k})^{kl}\leq e^{-l}$.
Note that $A_u$ is mutually independent of all events $A_v$ for which $d(u,v)> 3$.
Consequently, $A_u$ is mutually independent of all but at most $k ^3-k^2+k+1$ events.
In view of Lov{\'a}sz Local Lemma, if $e(k ^3-k^2+k+2)e^{-l}\leq 1$, then none of the events
$A_u$ happens with positive probability. In other words, for $l=\lceil 3\ln k+1 \rceil$,
there are permutations $\sigma_1,\ldots,\sigma_l$ such that $A_u$ does not
happen for any $u\in U$. It means that for any vertex $u\in U$, there is an $i$ ($1\leq i\leq l$)
such that $N(u)\cap I_{\sigma_i}\neq\varnothing$. Note that if we set $T=\displaystyle\bigcup_{i=1}^l I_{\sigma_i}$, then
$G[T]$, the induced subgraph on $T$, has the chromatic number at most $l$.
Assume that $c_1$ is a proper $\chi(G[T])$-coloring of $G[T]$. Now, let $H$
be a hypergraph with the vertex set $T$ and the hyperedge set defined as follows
$$E(H)=\{N(v)|v\in V(G),\ N(v)\subseteq T\}.$$
One can check that $H$ is $2$-colorable. If $H$ is an empty hypergraph, there is noting to prove.
Otherwise, since $H$ is a $k$-uniform hypergraph with maximum degree at most $k$ ($k\geq 4$),
$H$ is $2$-colorable. Assume that $c_2$ is a
$2$-coloring of $H$. It is obvious that $c=(c_1,c_2)$ is a $2l$-coloring of
$G[T]$. Now, consider a $(\chi(G)+2l)$-coloring $f$ for $G$ such that the
restriction of $f$ on $T$ is the same as $c$. One can check that $f$ is a dynamic coloring of $G$.
}\end{proof}
It is proved in \cite{MR2746973} that for any $c>6$, there is a threshold $n(c)$ such that if $G$ is a $k$-regular
graph with $k\geq n(c)$ then, $G$ has a total dominating set inducing a graph with maximum degree at most $2c\log k$
(for instance if we set $c=7.03$ then $n(c)\leq 139$).
Note that in the proof of previous theorem, it is proved that any triangle free $k$-regular graph $G$ has a total dominating set $T$ such that
the induced subgraph $G[T]$ has the chromatic number at most $\lceil3\ln k +1\rceil$.

In the rest of the paper by $G\times H$ and $G\square H$, we refer to the Categorical product and
Cartesian product of graphs $G$ and $H$, respectively.
It is well-known that if $G$ is a graph with $\chi(G)>n$, then $G\times K_n$ is a uniquely $n$-colorable graph, see \cite{godsil2001algebraic}.

It was conjectured in \cite{ahdiak} that for any regular graph $G$ with $\chi(G)\geq 4$,
the chromatic number and the dynamic chromatic number are the same.
Here, we present a counterexample for this conjecture.

\begin{prepro}\label{counterexmp}
For any integer $n>1$, there are regular graphs with chromatic number $n$ whose dynamic chromatic number
is more than $n$.
\end{prepro}
\begin{proof}{
Assume that $G_1$ is a $d$-regular graph with $\chi(G_1)>n$ and $m=|V(G_1)|$.
Set $G_2= G_1 \square C_{(n-1)(d+2)+1}$ and $G'=G_2\times K_n$.
Note that $G'$ is a uniquely $n$-colorable graph with regularity $(n-1)(d+2)$.
Consider the $n$-coloring $(V_1,V_2,\ldots,V_n)$ for $G'$, where for $1\leq i\leq n$, $V_i=\{(g,i)\ |\ g\in V(G_2)\}$.
It is obvious that $|V_i|=m((n-1)(d+2)+1))$ is divisible by $(n-1)(d+2)+1$.
Now, for each $1\leq i\leq n$, consider $(S^i_1,S^i_2,\ldots,S^i_m)$ as
a partition of $V_i$ such that $|S^i_j|=(n-1)(d+2)+1$.
Now, for any $1\leq i\leq n$ and $1\leq j\leq m$, add a new vertex $s_{ij}$
and join this vertex to all the vertices in $S^i_j$ to construct the graph $G$.
Note that $G$ is an $((n-1)(d+2)+1)$-regular graph with $\chi(G)=n$.
Now, we claim that $\chi_2(G)>n$. To see this, assume that $c$ is an $n$-dynamic coloring of $G$. Since $G'$ is uniquely $n$-colorable,
the restriction of $c$ to $V(G)$ is $(V_1,V_2,\ldots,V_n)$. In the other words, all the vertices in $V_1$ have the same colore in $c$.
But, all the neighbors of $s^1_1$ are in $V_1$ and this means that $c$ is not a dynamic coloring.
}\end{proof}
However, it can be interesting to find some regular graph $G$ with $\chi(G)\geq 4$ and $\chi_2(G)-\chi(G)\geq 2$.

Also, as a generalization of Conjecture \ref{conj}, it was conjectured \cite{ahdiak} that for any graph $G$, $\chi_2(G)-\chi(G)\leq \lceil \frac{\Delta(G)}{\delta(G)}\rceil+1$. Here, we give a negative answer to this conjecture. To see this, assume that $G_1$ is a graph with $\chi(G_1)\geq 3$ and $n$ vertices such that $n>3\Delta(G_1)+5$. For any $2$-subset $\{u,v\}\subseteq V(G_1)$, add a new vertex $x_{uv}$ and join this vertex to the vertices $u$ and $v$.
Let $G$ be the resulting graph form $G_1$ by using this construction.
Note that $\chi_2(G)\geq n$, $\Delta(G)=\Delta(G_1)+n-1$, $\delta(G)=2$ and $\chi(G)=\chi(G_1)$.
Therefore if the conjecture was true, then we would have $$n-\Delta(G_1)-1\leq n-\chi(G_1)\leq \chi_2(G)-\chi(G)\leq \lceil \frac{\Delta(G_1)+n-1}{2}\rceil+1.$$
Note that it is not possible because $n>3\Delta(G_1)+5$, and consequently, $n-\Delta(G_1)-1>\lceil \frac{\Delta(G_1)+n-1}{2}\rceil+1$.\\

\noindent{\bf Acknowledgment}\\
The author wishes to express his deep gratitude to Hossein Hajiabolhassan
for drawing the author's attention to two counterexamples (Proposition~\ref{counterexmp} and the example after that).

\end{document}